\newtheorem{theorem}{Theorem}
\newtheorem{proposition}[theorem]{Proposition}
\newtheorem{lemma}[theorem]{Lemma}
\newtheorem{corollary}[theorem]{Corollary}
\theoremstyle{definition}
\newtheorem{example}[theorem]{Example}
\newtheorem{problem} [theorem] {Problem}
\begin{document}

\baselineskip 14.85 pt

\centerline{ \textbf{Double commutants of multiplication operators on $C(K).$}}
\medskip

\centerline  {\bf  A. K. Kitover}

\centerline  {Department of Mathematics, CCP, Philadelphia, PA 19130, USA}
\bigskip

\textbf{Abstract.} Let $C(K)$ be the space of all real or complex valued continuous functions on a compact Hausdorff space $K$. We are interested in the following property of $K$: for any real valued $f \in C(K)$ the double commutant of the corresponding multiplication operator $F$ coincides with the norm closed algebra generated by $F$ and $I$. In this case we say that $K \in \mathcal{DCP}$. It was proved in~\cite{Ki} that any locally connected metrizable continuum is in $\mathcal{DCP}$. In this paper we indicate a class of arc connected but not locally connected continua that are in $\mathcal{DCP}$.
We also construct an example of a continuum that is not arc connected but is in $\mathcal{DCP}$.

\bigskip

\section{Introduction}

The famous von Neumann's double commutant theorem~\cite{N} can be stated the following way. Let $(X, \Sigma, \mu)$ be a space with measure and $f$ be a real-valued element of $L^\infty(X, \Sigma, \mu)$. Let $F$ be the corresponding multiplication operator in $L^2(X, \Sigma, \mu)$, i.e.
$(Fx)(t) = f(t)x(t)$ for $x \in L^2(X, \Sigma, \mu)$ and $t$ from a subset of full measure in $X$. Then
$$\{F\}^{cc} = \mathcal{A}_F$$
where $\{F\}^{cc}$ is the double commutant (or bicommutant)  of $F$, i.e. $\{F\}^{cc}$ consists of all bounded linear operators on $L^2(X, \Sigma, \mu)$ that commute with every operator commuting with $F$ and $\mathcal{A}_F$ is the closure in the weak (or strong) operator topology of the algebra generated by $F$ and the identity operator $I$.

The generalization on the case of complex multiplication operators (or normal operators on a Hilbert space) is then immediate.
Quite naturally arises the question of obtaining similar results for multiplication operators on other Banach spaces of functions. De Pagter and Ricker proved in~\cite{PaR} that von Neumann's result remains true for spaces $L^p(0,1), 1 \leq p < \infty$, and more generally for any Banach ideal $X$ in the space of all measurable functions such that $X$ has order continuous norm and $L^\infty(0,1) \subset X \subseteq L^1(0,1)$. But they also proved that the double commutant of the operator $T$, $(Tx)(t) = tx(t), x \in L^\infty, t \in [0,1]$, is considerably larger than the algebra $\mathcal{A}_T$ and consists of all operators of multiplication by Riemann integrable functions on $[0,1]$. The last result gives rise to the following question: let $C(K)$ be the space of all continuous real-valued functions on a Hausdorff compact space $K$. When is it true that for every multiplication operator $F$ on $C(K)$ its double commutant coincides with the algebra $\mathcal{A}_F$? This property is obviously a topological invariant of $K$ and we will denote  the class of compact Hausdorff spaces that have it as $\mathcal{DCP}$ (short for double commutant property).

\section{Continuums with $\mathcal{DCP}$ property}

 In~\cite{Ki} the author proved that if $K$ is a compact metrizable space without isolated points then the following implications hold.
 \begin{enumerate}
   \item If $K$ is connected and locally connected then $ K \in \mathcal{DCP}$.
   \item If $K \in \mathcal{DCP}$ then $K$ is connected.
 \end{enumerate}

 In the presence of isolated points the analogues of the above statements become more complicated (see~\cite[Theorem 1.15]{Ki}). To avoid these minor complications and keep closer to the essence of the problem we will assume that the compact spaces we consider have no isolated points.

 A simple example (see~\cite[Example 1.16]{Ki}) shows that the condition that $K$ is connected is not sufficient for $K \in \mathcal{DCP}$.
  \begin{example} \label{e1} Let $K$ be the closure in $\mathds{R}^2$ of the set $\{(x, \sin{1/x}) : x \in (0,1]\}$. Let $f(x,y) =x, (x,y) \in K$, and let $F$ be the corresponding multiplication operator. Then it is easy to see (see details in~\cite[Example 1.16]{Ki})  that the double commutant $\{F\}^{cc}$ consists of all operators of multiplication on functions from $C(K)$ but $\mathcal{A}_F$ consists of operators of multiplication on functions from $C(K)$ that are constant on the set $\{(0,y): y \in [0,1]\}$.
\end{example}
 Therefore the next question is whether the condition that $K$ is connected and locally connected is necessary for $K \in \mathcal{DCP}$? Below we provide a negative answer to this question. In order to consider the corresponding example let us recall the following two simple facts.

 \begin{proposition} \label{p1} Let $K$ be a compact Hausdorff space and $f \in C(K)$. Let $F$ be the corresponding multiplication operator. Then
 \begin{enumerate}
   \item The double commutant $\{F\}^{cc}$ consists of multiplication operators.
   \item The algebra $\mathcal{A}_F$ coincides with the closure of the algebra generated by $F$ and $I$ in the operator norm.
 \end{enumerate}

 \end{proposition}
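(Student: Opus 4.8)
The plan is to treat the two parts separately, in both cases exploiting that on $C(K)$ the constant function $\mathbf{1}$ acts as a separating vector for multiplication operators, so that an operator's action on $\mathbf{1}$ already carries decisive information. For part (1), I would first observe that every multiplication operator $M_g$ with $g \in C(K)$ commutes with $F = M_f$, since pointwise multiplication is commutative; hence $\{F\}^c$ contains all multiplication operators, and consequently every element of $\{F\}^{cc}$ commutes with every $M_g$. The key step is then to show that any bounded $T$ on $C(K)$ commuting with all multiplication operators is itself a multiplication operator. Setting $g := T\mathbf{1} \in C(K)$, one computes for arbitrary $h \in C(K)$
\[
Th = T(M_h \mathbf{1}) = M_h(T\mathbf{1}) = M_h g = g\cdot h,
\]
so $T = M_g$; thus $\{F\}^{cc}$ consists of multiplication operators.

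For part (2), write $\mathcal P := \{p(F) : p \text{ a polynomial}\}$ for the algebra generated by $F$ and $I$, and note $p(F) = M_{p(f)}$, so that $\mathcal A_F = \overline{\mathcal P}^{\,\mathrm{WOT}}$. The inclusions $\overline{\mathcal P}^{\,\|\cdot\|} \subseteq \overline{\mathcal P}^{\,\mathrm{SOT}} \subseteq \overline{\mathcal P}^{\,\mathrm{WOT}}$ are automatic because the topologies weaken from left to right. Moreover $\mathcal P$ is a linear subspace, hence convex, and the weak and strong operator topologies on $B(C(K))$ admit the same continuous linear functionals (the finite-rank functionals $T \mapsto \sum_i \langle x_i^*, T x_i\rangle$); so a standard Hahn--Banach argument gives $\overline{\mathcal P}^{\,\mathrm{WOT}} = \overline{\mathcal P}^{\,\mathrm{SOT}}$. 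It therefore suffices to prove the single inclusion $\overline{\mathcal P}^{\,\mathrm{SOT}} \subseteq \overline{\mathcal P}^{\,\|\cdot\|}$.

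This last inclusion is the crux, and I expect it to be the main point of the argument. Here evaluation at $\mathbf{1}$ together with the isometry $g \mapsto M_g$ (using $\|M_g\| = \|g\|_\infty$ on $C(K)$, which holds because $\|M_g \mathbf{1}\|_\infty = \|g\|_\infty$) does the work: if a net $M_{p_\alpha(f)}$ converges to $T$ in the strong operator topology, then $p_\alpha(f) = M_{p_\alpha(f)}\mathbf{1} \to T\mathbf{1}$ in the uniform norm, and writing $g := T\mathbf{1}$ the isometry yields $\|M_{p_\alpha(f)} - M_g\| = \|p_\alpha(f) - g\|_\infty \to 0$, whence $T = M_g \in \overline{\mathcal P}^{\,\|\cdot\|}$. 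Combining the inclusions shows all three closures coincide. The essential subtlety is precisely this upgrade from strong to norm convergence: it is special to $C(K)$ and hinges on the constant function realizing the full operator norm of a multiplication operator, so that convergence tested at the single vector $\mathbf{1}$ already controls the operator norm. The remaining ingredients---the commutation trick in part (1) and the convexity argument relating the WOT and SOT closures---are routine.
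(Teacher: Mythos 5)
Your proof is correct and takes essentially the same route as the paper: part (1) is the identical computation $Th = T(M_h\mathbf{1}) = M_h(T\mathbf{1})$, and part (2) rests on the same key observation that $\|M_g\| = \|M_g\mathbf{1}\|_{C(K)} = \|g\|_\infty$, so that strong operator convergence of multiplication operators, tested at $\mathbf{1}$, upgrades to norm convergence. The only difference is that you explicitly reconcile the WOT and SOT closures via the convexity/Hahn--Banach argument, a point the paper leaves implicit in its definition of $\mathcal{A}_F$.
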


 \begin{proof} $(1)$. Let $T \in \{F\}^{cc}$. Let $\mathbf{1}$ be the function in $C(K)$ identically equal to 1. Clearly for every $a \in C(K)$ the operator $F$ commutes with the multiplication operator $A$ where $Ax=ax, x \in C(K)$. Therefore for any $a \in C(K)$ $T$ commutes with $A$ and $T(a) = T(a\mathbf{1})= TA\mathbf{1} = AT\mathbf{1} = aT\mathbf{1} = (T\mathbf{1})a$. Hence if we take $g = T\mathbf{1}$ then $T$ coincides with
the multiplication operator $G$ generated by the function $g$.

\noindent $(2)$ If $T \in \{F\}^{cc}$ then by part $(1)$ of the proof $T=G$ where $G$ is a multiplication operator by a function $g \in C(K)$.       It remains to notice that $\|G\| = \|G \mathbf{1}\|_{C(K)}$ and therefore on $\{F\}^{cc}$ the convergence in strong operator topology implies convergence in the operator norm.
 \end{proof}

 \begin{corollary} \label{c1} Let $f \in C(K)$ and $F$ be the corresponding multiplication operator. The following two statements are equivalent.

 $(1)$ $\{F\}^{cc} = \mathcal{A}_F$.

 $(2)$ For any $G \in \{F\}^{cc}$ and for any $s, t \in K$ the implication holds
 $$f(s) = f(t) \Rightarrow g(s) = g(t),$$
 where $g \in C(K)$ is the function corresponding to the operator $G$.

 \end{corollary}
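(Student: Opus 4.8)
The plan is to reduce the whole statement to an explicit description of the two algebras as sets of multiplication operators. By Proposition~\ref{p1}$(1)$ every element of $\{F\}^{cc}$ is a multiplication operator $G$, and by Proposition~\ref{p1}$(2)$ the algebra $\mathcal{A}_F$ is exactly the set of multiplication operators by functions lying in the uniform closure $\mathcal{B} := \overline{\{p(f): p \text{ a polynomial}\}}$ of the polynomials in $f$ inside $C(K)$. The first thing I would record is the always-valid inclusion $\mathcal{A}_F \subseteq \{F\}^{cc}$: indeed $F$ and $I$ both belong to $\{F\}^{cc}$, and the bicommutant is a norm-closed algebra, so it contains every polynomial in $F$ together with their norm limits, i.e. all of $\mathcal{A}_F$. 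Hence the entire content of the corollary lies in the reverse inclusion.

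The key step is to identify $\mathcal{B}$ with the functions that are constant on the level sets of $f$, that is, $\mathcal{B} = \{g \in C(K): f(s)=f(t) \Rightarrow g(s)=g(t)\}$. The inclusion $\subseteq$ is immediate, since each $p(f)$ has this property and the property passes to uniform limits. For $\supseteq$ I would invoke the Stone--Weierstrass theorem. Writing $s \approx t$ iff $h(s)=h(t)$ for all $h \in \mathcal{B}$, and noting that $\mathcal{B}$ is a closed unital self-adjoint subalgebra of $C(K)$ (self-adjoint because $f$ is real-valued), Stone--Weierstrass gives $\mathcal{B} = \{g \in C(K): g \text{ is constant on each } \approx\text{-class}\}$. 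Since $f \in \mathcal{B}$, while conversely $f(s)=f(t)$ forces every polynomial in $f$, and hence every element of $\mathcal{B}$, to agree at $s$ and $t$, the relation $\approx$ coincides with $f(s)=f(t)$; this yields the claimed description. (Equivalently, one may factor $g = \tilde{g} \circ f$ through the compact set $f(K) \subset \mathds{R}$, observe that $\tilde{g}$ is continuous because $f$ is a quotient map, and apply the classical Weierstrass theorem on $f(K)$.)

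With this description in hand the equivalence becomes formal. Denote by $\mathcal{G} \subseteq C(K)$ the set of functions whose multiplication operators constitute $\{F\}^{cc}$; the inclusion $\mathcal{A}_F \subseteq \{F\}^{cc}$ just says $\mathcal{B} \subseteq \mathcal{G}$. Then $\{F\}^{cc} = \mathcal{A}_F$ holds iff $\mathcal{G} = \mathcal{B}$, and since $\mathcal{B} \subseteq \mathcal{G}$ is automatic, this is equivalent to $\mathcal{G} \subseteq \mathcal{B}$, i.e. to every $g \in \mathcal{G}$ satisfying $f(s)=f(t) \Rightarrow g(s)=g(t)$, which is precisely statement $(2)$. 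For $(1) \Rightarrow (2)$ one simply reads off that each $g \in \mathcal{G} = \mathcal{B}$ obeys the implication; for $(2) \Rightarrow (1)$ one uses $(2)$ to place every $g \in \mathcal{G}$ into $\mathcal{B}$, obtaining $\mathcal{G} \subseteq \mathcal{B}$ and hence equality.

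I expect the only delicate point to be the $\supseteq$ half of the description of $\mathcal{B}$, where the real-valuedness of $f$ is essential: it is what guarantees that the algebra generated by $f$ and $I$ is self-adjoint, so that Stone--Weierstrass applies, and in the factorization viewpoint it is what makes $f(K)$ a compact subset of the line on which polynomials are dense. Everything else is bookkeeping with the two inclusions.
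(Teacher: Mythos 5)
Your proof is correct and follows exactly the route the paper intends: Corollary~\ref{c1} is stated there without proof as an immediate consequence of Proposition~\ref{p1}, the implicit argument being precisely your identification of $\mathcal{A}_F$ (via the isometry $\|G\| = \|g\|_{C(K)}$) with the multiplications by the uniform closure of the polynomials in $f$, together with the Weierstrass/Stone--Weierstrass description of that closure as the continuous functions constant on the level sets of $f$, where real-valuedness of $f$ enters just as you say. Your formal bookkeeping with the automatic inclusion $\mathcal{A}_F \subseteq \{F\}^{cc}$ is also sound, so there is nothing to add.
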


 In what follows our main tool will be the following lemma which was actually proved though not stated explicitly in~\cite{Ki} (see~\cite[Proof of Theorem 1.14]{Ki}).

 \begin{lemma} \label{l1} Let $K$ be a compact metrizable space, $f \in C(K)$, and $F$ be the corresponding multiplication operator. Let $G \in \{F\}^{cc}$ and $g$ be the corresponding function from $C(K)$. Let $u, v \in K$ be such that
 \begin{itemize}
  \item $f(u) = f(v)$.
   \item The points $u$ and $v$ have open, and locally connected neighborhoods in $K$.
   \item For any open connected neighborhood $U$ of $u$ there is an open interval $I_U$  in $\mathds{R}$  such that $f(u) \in I_U \subset f(U)$.
 \end{itemize}

 Then $g(u) = g(v)$.

 \end{lemma}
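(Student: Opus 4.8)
The plan is to use that, by Proposition~\ref{p1}, $G$ is the operator of multiplication by $g\in C(K)$, and that, lying in $\{F\}^{cc}$, it commutes with \emph{every} operator commuting with $F$. It therefore suffices to produce a single bounded operator $A$ on $C(K)$ with $AF=FA$ for which the forced identity $AG=GA$ reads off the equality $g(u)=g(v)$. Throughout write $c=f(u)=f(v)$.

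The operator I would build is a weighted composition operator tailored to $u$ and $v$, whose building block is a continuous \emph{local section} of $f$ through $u$: a continuous map $\sigma\colon (c-\delta,c+\delta)\to K$ with $f(\sigma(\lambda))=\lambda$ for all $\lambda$ and $\sigma(c)=u$, whose image lies in a prescribed small neighborhood of $u$. Granting such a $\sigma$, choose $\chi\in C(K)$ with $0\le\chi\le 1$, $\chi(v)=1$, and $\operatorname{supp}\chi\subseteq f^{-1}\big((c-\delta,c+\delta)\big)$ (possible since $f(v)=c$ is interior to the interval), and set
$$(Ax)(t)=\chi(t)\,x\big(\sigma(f(t))\big),\qquad x\in C(K),\ t\in K.$$
Since $\sigma\circ f$ is continuous on the open set $\{\chi>0\}$ while $\chi$ vanishes on its boundary, $Ax\in C(K)$ and $\|A\|\le 1$. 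The inner map $t\mapsto\sigma(f(t))$ preserves the fibers of $f$, because $f(\sigma(f(t)))=f(t)$, so $(AFx)(t)=\chi(t)f(t)x(\sigma(f(t)))=(FAx)(t)$, i.e. $A$ commutes with $F$. As $G\in\{F\}^{cc}$, it must commute with this $A$; comparing
$$(GAx)(t)=g(t)\chi(t)\,x(\sigma(f(t))),\qquad (AGx)(t)=\chi(t)\,g(\sigma(f(t)))\,x(\sigma(f(t)))$$
and letting $x$ vary forces $\chi(t)\,g(t)=\chi(t)\,g(\sigma(f(t)))$ for all $t$. Evaluating at $t=v$, where $\chi(v)=1$ and $\sigma(f(v))=\sigma(c)=u$, yields $g(v)=g(u)$, as required.

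The real work, and the step I expect to be the main obstacle, is the construction of the continuous local section $\sigma$; this is precisely where the local connectedness of $K$ at $u$ and the interval‑covering hypothesis are used, with metrizability providing the sequential and diameter control. Concretely I would fix a neighborhood base $U_1\supseteq\overline{U_2}\supseteq U_2\supseteq\cdots$ of $u$ consisting of connected open sets with $\operatorname{diam}U_n\to 0$ (available from local connectedness and metrizability); the interval hypothesis then supplies open intervals $I_n\ni c$ with $I_n\subseteq f(U_n)$, so that for $\lambda$ near $c$ the fiber $f^{-1}(\lambda)$ meets $U_n$, and these intersections shrink toward $u$ as $\lambda\to c$. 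The delicate point is to turn this into a genuinely continuous selection $\lambda\mapsto\sigma(\lambda)$ rather than a mere pointwise choice.

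To make the argument robust against that delicacy I would, if necessary, replace the point mass $\delta_{\sigma(f(t))}$ by a weak$^{*}$‑continuous family of probability measures $\nu_\lambda$ carried by the part of $f^{-1}(\lambda)\cap U_n$ near $u$ and satisfying $\nu_\lambda\to\delta_u$ as $\lambda\to c$, setting $(Ax)(t)=\chi(t)\int x\,d\nu_{f(t)}$. Because each $\nu_\lambda$ is supported in $f^{-1}(\lambda)$, the same computation shows $AF=FA$, and commutation of $G$ with $A$ forces $\nu_{f(t)}$ to be supported in $\{s:g(s)=g(t)\}$ wherever $\chi(t)\neq0$; at $t=v$, with $\nu_c=\delta_u$, this again gives $g(u)=g(v)$. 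I note in passing that this argument uses the local connectedness of $K$ only at $u$; the hypothesis at $v$ appears to enter merely through $v$ lying in the fiber over $c$.
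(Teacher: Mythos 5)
Your algebraic frame is sound and is essentially the mechanism behind the paper's source for this lemma (note the paper does not reprove it; it cites the proof of Theorem 1.14 in \cite{Ki}): operators commuting with $F$ are exactly those whose kernels $t \mapsto \mu_t$ consist of measures supported in the fibers $f^{-1}(f(t))$, and exhibiting one that carries $v$ to $u$ forces $g(u)=g(v)$. Your computations that $A$ commutes with $F$ and that $AG=GA$ evaluated at $v$ yields $g(v)=g(u)$ are correct. The genuine gap is the existence of the continuous local section $\sigma$ with $f\circ\sigma=\mathrm{id}$ and $\sigma(c)=u$: under the stated hypotheses no such $\sigma$ need exist. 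Concretely, take $K=[-1,1]$, $f(t)=t\sin(1/t)$ with $f(0)=0$, $u=0$, $v=1/\pi$. All hypotheses of the lemma hold: $K$ is locally connected everywhere, and for every connected neighborhood $U$ of $0$ the image $f(U)$ is an interval containing values of both signs, hence an open interval around $c=0$. But a continuous $\sigma$ on $(-\delta,\delta)$ with $f\circ\sigma=\mathrm{id}$ is injective, hence a monotone homeomorphism onto an open interval $J$ containing $\sigma(0)=0$, on which $f=\sigma^{-1}$ would be strictly monotone --- impossible, since $t\sin(1/t)$ changes sign on every neighborhood of $0$. So what you flag as ``the main obstacle'' is in fact a provable obstruction, not a delicacy, and your primary construction cannot be completed. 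Patching via sections through nearby fiber points also fails in general: for a continuous nowhere monotone $f$, at a point $u$ that is not a local extremum the hypotheses hold, yet no point of $K$ admits any local section, by the same monotonicity argument.

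That leaves your fallback --- a weak$^{*}$-continuous family $\nu_\lambda$ of probability measures on $f^{-1}(\lambda)$ with $\nu_c=\delta_u$ --- as the actual proof, and there you only assert existence; constructing such a family is precisely the content of the lemma and is where all the work lies. Such families do not exist in general compact metric spaces across arbitrary levels: for the ``staircase'' arc (two horizontal segments at different heights joined by a vertical riser, $f$ the height function) the family is forced to be a point mass on each side of the riser level, with different one-sided limits there, so no weak$^{*}$-continuous choice exists through that level. Hence the existence of your $\nu_\lambda$ must be wrung out of the specific hypotheses --- shrinking connected neighborhoods $U_n$ of $u$ with $f(U_n)$ containing two-sided open intervals $I_n\ni c$ --- for instance by gluing, via a partition of unity in the variable $\lambda$ over overlapping subintervals, families supported in $\overline{U_n}$ so that the supports shrink to $u$ as $\lambda\to c$; none of this appears in your write-up. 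As it stands the proposal reduces the lemma to an unproved selection statement that is false without the hypotheses and not obviously true with them; this is exactly the step supplied by the proof of Theorem 1.14 in \cite{Ki}, and your closing remark that local connectedness at $v$ is dispensable is contingent on that same unproved step, so it cannot be credited either.
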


 We will also need a simple lemma proved in~\cite[Lemma 1.13]{Ki}

 \begin{lemma} \label{l2} Let $K$ be a compact Hausdorff space, $F, G$ multiplication operators on $C(K)$ by functions $f$ and $g$, respectively and $G \in \{F\}^{cc}$. Let $k \in K$ be such that $Int f^{-1}(\{f(k)\}) \neq \emptyset$.
 Then $g$ is constant on $f^{-1}(\{f(k)\})$.
 \end{lemma}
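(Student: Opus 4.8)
The plan is to realize the double-commutant hypothesis concretely: for each point of the level set I would exhibit an explicit rank-one operator that commutes with $F$, and then, since $G \in \{F\}^{cc}$ must commute with every such operator, read off the value of $g$ directly from the resulting commutation identity. No appeal to the earlier lemmas is needed; the whole argument is internal to this rank-one construction and works for arbitrary compact Hausdorff $K$.

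To set up, write $c = f(k)$ and $E = f^{-1}(\{c\})$. The set $E$ is closed, and by hypothesis its interior $U = \mathrm{Int}\,E$ is nonempty. The first step is to fix an auxiliary function. Since $K$ is compact Hausdorff, hence normal, Urysohn's lemma provides $h \in C(K)$ together with a point $p \in U$ such that $h(p) \neq 0$ and $h \equiv 0$ on $K \setminus U$. Because $K \setminus E \subseteq K \setminus U$, this $h$ vanishes wherever $f \neq c$, so that $fh = ch$ holds identically on $K$. This identity, rather than any topological feature of $U$, is what the construction will exploit.

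The second step is the operators themselves. For an arbitrary $t \in E$ I would consider the rank-one operator $T_t x = x(t)\,h$, which is clearly bounded on $C(K)$. A direct computation — using $f(t) = c$ in one composition and $fh = ch$ in the other — gives $F T_t = T_t F = c\,T_t$, so $T_t$ commutes with $F$. Hence $G T_t = T_t G$; evaluating both sides on the constant function $\mathbf 1$ yields $gh = g(t)\,h$, that is, $(g - g(t))\,h \equiv 0$. Evaluating this at $p$, where $h(p) \neq 0$, forces $g(t) = g(p)$. Since $t \in E$ was arbitrary, $g \equiv g(p)$ on $E$, which is exactly the claim.

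The point deserving the most care — and the one that clarifies why nonempty interior is precisely the right hypothesis — is the mismatch between the evaluation point $t$ and the support of $h$. The function $h$ is forced to live inside $U = \mathrm{Int}\,E$ (this is what makes $T_t$ commute with $F$), whereas the evaluation point $t$ ranges over all of $E$, including boundary points that need not be limits of interior points. The single fixed interior witness $p$ is what lets the conclusion reach such boundary points with no approximation or connectivity argument, so neither local connectedness nor metrizability of $K$ enters. The only genuine prerequisite is that $U$ be nonempty, which is exactly what guarantees the existence of a nonzero admissible $h$.
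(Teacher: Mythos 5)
Your argument is correct and complete. The paper itself does not prove this lemma --- it simply cites \cite[Lemma 1.13]{Ki} --- so your proposal in effect supplies the missing proof, and it does so by the natural mechanism: manufacture explicit elements of the commutant $\{F\}^c$ and let the bicommutant hypothesis do the work. All the key verifications check out. Fixing $p \in U$ and applying Urysohn's lemma to the disjoint closed sets $\{p\}$ and $K \setminus U$ (compact Hausdorff spaces are normal and points are closed) gives your $h$; since $h$ vanishes off $U \subseteq E$, the identity $fh = ch$ holds pointwise (either $h = 0$ or $f = c$ at each point), and then both compositions $FT_t$ and $T_tF$ equal $cT_t$, the first via $fh = ch$ and the second via $f(t) = c$. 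Evaluating $GT_t = T_tG$ at $\mathbf{1}$ gives $gh = g(t)h$, and evaluation at $p$ pins down $g(t) = g(p)$ for every $t \in E$, including boundary points of $E$ not approximable from $U$ --- your remark that the single interior witness $p$ reaches such points without any connectivity or metrizability assumption is exactly right, and it explains why this lemma, unlike Lemma~\ref{l1}, holds for arbitrary compact Hausdorff $K$. The only cosmetic point: Urysohn's lemma does not ``provide'' the point $p$; you choose $p \in U$ first and then invoke Urysohn for the pair $\{p\}$, $K \setminus U$, but this is a matter of phrasing, not substance.
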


 Now we are ready to give an example of a metrizable connected compact space $K$ such that $K$ is not locally connected but $K \in \mathcal{DCP}$. Let $B$ be the well known \textbf{``broom''}.
 $$B = cl \{(x,y) \in \mathds{R}^2 : \; x \geq 0, \; y = \frac{1}{n} x, \; n \in \mathds{N}, \; x^2 + y^2 \leq 1 \}.$$

 \begin{proposition} \label{p2} $B \in \mathcal{DCP}$.

 \end{proposition}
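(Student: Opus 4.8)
The plan is to verify the criterion of Corollary~\ref{c1}: fixing $G\in\{F\}^{cc}$ with corresponding $g\in C(B)$, I must show that $f(s)=f(t)$ forces $g(s)=g(t)$, i.e. that $g$ is constant on every level set $f^{-1}(c)$. It is convenient to coordinatize $B$ by its arcs: writing $O=(0,0)$ and letting $A_n$ ($n\in\mathds{N}$) be the segment of slope $1/n$ and $A_\infty=\{(x,0):0\le x\le 1\}$ the limit segment, every point of $A_n$ or $A_\infty$ is recorded by its distance $\rho\in[0,1]$ from $O$, and the arcs meet only at $O$. Continuity of $f$ and $g$ on the compact space $B$ gives $f_n:=f|_{A_n}\to f_\infty$ and $g_n:=g|_{A_n}\to g_\infty$ uniformly on $[0,1]$, with $f_n(0)=f(O)$ for all $n$. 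The only points of $B$ lacking an open locally connected neighbourhood are those of $A_\infty\setminus\{O\}$: every interior or endpoint point of a finite arc has a neighbourhood homeomorphic to an interval, and $O$ has a basis of star-shaped connected neighbourhoods. By Lemma~\ref{l2} I may moreover assume that the level set $f^{-1}(c)$ under consideration has empty interior.

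The first substantive step handles the locally connected part $Y:=\{O\}\cup\bigcup_{n\in\mathds{N}}A_n$. Call a point $w$ \emph{regular} if it has a locally connected neighbourhood and every connected neighbourhood $U$ of $w$ satisfies $f(w)\in\mathrm{Int}\,f(U)$; on a finite arc the latter is exactly the requirement that $w$ be neither a one-sided nor a two-sided local extremum of $f$ along that arc, and by the intermediate value theorem such $w$ are plentiful in $f^{-1}(c)$ whenever $c$ is an interior value of some $f_n$. If $f^{-1}(c)$ contains a regular point $w$, then applying Lemma~\ref{l1} with $u=w$ and $v$ ranging over the remaining points of the fiber that possess a locally connected neighbourhood yields $g(v)=g(w)$ for \emph{all} such $v$ — in particular for all finite-arc points and for $O$, and across different arcs, since the lemma is a statement inside $B$. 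Thus $g$ takes a single value, which I call $\phi(c)$, on $Y\cap f^{-1}(c)$.

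The second step disposes of the fibers of empty interior that contain no regular point (local-extreme values, e.g. a Cantor-type fiber on a single arc), defining $\phi$ there by approximation: choosing non-extreme values $c_k\to c$ of the relevant $f_n$, each $f^{-1}(c_k)$ carries a regular point and hence a constant $g$-value $\phi(c_k)$, and the associated points may be taken to converge to points of $f^{-1}(c)$; continuity of $g$ then forces $\phi(c_k)$ to converge and $g$ to equal the limit on $Y\cap f^{-1}(c)$. This is the mechanism already underlying the locally connected case of~\cite[Theorem~1.14]{Ki}, and it is legitimate here because $Y$ is locally connected. The outcome is a function $\phi$ on $f(Y)$ with $g=\phi\circ f$ throughout $Y$.

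It remains to transfer the conclusion to $A_\infty$, which is the heart of the matter. For $p\in A_\infty$ at distance $\rho_0$ from $O$, the radially aligned points $p_n\in A_n$ at the same distance $\rho_0$ satisfy $p_n\to p$ and $f(p_n)=f_n(\rho_0)\to f(p)$, so that $g(p)=\lim_n g(p_n)=\lim_n\phi\bigl(f_n(\rho_0)\bigr)$. If one knows that $\phi$ has a genuine limit at $c:=f(p)$, this equals $\phi(c)$, whence for any second point $q\in A_\infty$ with $f(q)=c$ the identical computation gives $g(q)=\phi(c)=g(p)$; together with the two previous steps this yields $g=\phi\circ f$ on all of $B$, i.e. $g$ is constant on every level set, and Corollary~\ref{c1} then gives $B\in\mathcal{DCP}$. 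I expect the \emph{continuity of $\phi$ at the values attained on $A_\infty$} to be the main obstacle: a priori the computation above only produces, for each point of $A_\infty$, a \emph{directional} limit of $\phi$ along the sequence $f_n(\rho_0)\to c$, and it is precisely the failure of these directional limits to agree that puts the topologist's sine curve of Example~\ref{e1} outside $\mathcal{DCP}$. For the broom the agreement must be extracted from the two special features absent there: the approximating arcs $A_n$ are radially aligned with $A_\infty$, so $f_n\to f_\infty$ uniformly and regular points of fibers with values near $c$ can be steered to converge to the prescribed point of $A_\infty$; and all arcs meet at the single locally connected point $O$, at which $f$ takes one common value. Making this agreement precise is the crux of the proof.
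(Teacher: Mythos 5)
Your setup (Corollary~\ref{c1}, disposing of fibers with interior via Lemma~\ref{l2}, and using Lemma~\ref{l1} at ``regular'' points to make $g$ constant on fibers inside the locally connected part $Y$) matches the paper's steps $(I)$--$(II)$, but the proposal is not a proof: you end by declaring the agreement of the directional limits $\lim_n \phi\bigl(f_n(\rho_0)\bigr)$ --- i.e.\ the constancy of $g$ on fibers meeting $A_\infty$ --- to be ``the crux'' and leave it unestablished. That is exactly the step where the broom differs from the sine curve of Example~\ref{e1}; everything you actually prove (constancy of $g$ on fibers within the locally connected part) holds verbatim for the sine-curve continuum too, so the essential content of Proposition~\ref{p2} is missing. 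The continuity-of-$\phi$ framing is also the wrong tool to reach for: proving $\phi$ continuous at the values attained on $A_\infty$ is equivalent in difficulty to the theorem itself (a posteriori it follows because $f$ is a quotient map of compacta, but that presupposes $g$ constant on all fibers).

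The paper sidesteps $\phi$ entirely by \emph{anchoring} and \emph{exact value matching}. For $p \in A_\infty$ with $c = f(p) \in (0,M)$, choose a finite sub-broom $B_m$ (the paper's ``$n \geq m$'' in the definition of $B_m$ is evidently a typo for $n \leq m$: the $B_m$ must be compact and increasing so that inequality $(1)$ holds for large $m$) with $\min_{B_m} f < c < \max_{B_m} f$. Either $cl\{f<c\}$ and $cl\{f>c\}$ are disjoint in $B_m$, whence $f \equiv c$ on an open set and Lemma~\ref{l2} finishes, or there is an anchor $w \in B_m$ with $f(w) = c$ and $f(w)$ interior to $f(W)$ for every connected neighborhood $W$ of $w$. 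Then for \emph{any} sequence $u_n \to p$ with $u_n \in B\setminus D$ one can choose $w_n \to w$ with $f(w_n) = f(u_n)$ \emph{exactly}, and your own step 1 gives $g(u_n) = g(w_n)$, hence $g(p) = g(w)$: every directional limit equals $g(w)$, regardless of direction, and the agreement problem evaporates (this is the paper's step $(III)$). For $c \in \{0, M\}$ attained on $A_\infty$ while $f > 0$ on $B\setminus D$, no anchor off $A_\infty$ exists and a second device is needed (the paper's step $(IV)$): take the first zero $(a,0)$ along $A_\infty$, note that $f$ maps each segment $\{(x,0): a - 1/n \leq x \leq a\}$ onto a full interval $[0,\delta_n]$, and match values of off-segment points $u_n \to u$ with points $(a_n,0)$, so the already-settled interior-value case yields $g(u) = g\bigl((a,0)\bigr)$ for every $u$ in the fiber. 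Both devices are absent from your proposal. A smaller wrinkle of the same nature infects your step 2: two points of an extreme-type fiber at an interior value $c$ (say a local max and a local min of $f$ both at level $c$) may be approximable only from \emph{opposite} sides of $c$, so the pairwise matching $f(y_k) = f(y_k')$ with a common sequence $c_k$ can fail unless both points are routed through an anchor $w$, where values fill an interval around $c$ on both sides.
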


 \begin{proof} Let $f \in C(B)$ and $G \in \{F\}^{cc}$. By part $(1)$ of Proposition~\ref{p1} $G$ is a multiplication operator. Let $g$ be the corresponding function from $C(K)$. Let $u, v \in B$ and $f(u) = f(v)$. We can assume without loss of generality that $f \geq 0$ and $\min \limits_{k \in B} f(k) = 0$. Let $D = \{k \in B: \; k = (x,0), 0 < x \leq 1\}$. We will divide the remaining part of the proof into four steps.

 $(I)$. Assume first that $u, v \in B \setminus D$ and that
 $0 < f(u) = f(v) < M = \max \limits_{k \in B} f(k)$. \footnote{ We can assume of course that $M > 0$ because otherwise $F = 0$ and the statement
 $\{F\}^{cc} = \mathcal{A}_F$ becomes trivial.} For any $m \in \mathds{N}$ let
 $B_m = \{(x,y) \in \mathds{R}^2 : \; x \geq 0, \; y = \frac{1}{n} x, \;  n \geq m, \; x^2 + y^2 \leq 1 \}$. Then for any large enough $m$ we have
 $$ \min \limits_{k \in B_m} f(k) < f(u) < \max \limits_{k \in B_m} f(k). \eqno{(1)}$$.
  Notice that for every $m \in \mathds{N}$ the set $B_m$ is a compact, connected and locally connected subset of $B$. Moreover, every point of $B_m$ is a point of local connectedness in $B$ and the set $B_m \setminus \{0,0\}$ is open in $B$. Let $B_m^1 = cl\{k \in B_m : \; f(k) < f(u)\}$ and $B_m^2 = cl\{k \in B_m : \; f(k) > f(u)\}$. There are two possibilities. $(a)$. The set $B^1_m \cap B^2_m$ is empty. In this case, because $B_m$ is connected, $f \equiv f(u)$ on some open subset of $B$ and by Lemma~\ref{l2} we have $g(u) = g(v)$.

 $(b)$. $\exists w \in B^1_m \cap B^2_m$. Because $B$ is locally connected at $w$ the pairs of points $(u,w)$ as well as $(v,w)$ satisfy all the conditions of Lemma~\ref{l1} whence $g(u) = g(v)$.

 $(II)$ Let $u, v \in B \setminus D$ and $f(u) = f(v) = 0$. There are two possibilities. First: $f \equiv 0$ on some open neighborhood of either $u$ or $v$. Then $g(u) = g(v)$ by Lemma~\ref{l2}. Second: $f$ is not constant on any open neighborhood of either $u$ or $v$. In this case, because $B \setminus D$ is locally connected, we can find sequences $u_n \mathop \rightarrow \limits_{n \to \infty} u$ and $v_n \mathop \rightarrow \limits_{n \to \infty} v$ such that $u_n, v_n \in B \setminus D$ and $0 < f(u_n) = f(v_n) < M, \; n \in \mathds{N}$. Then by the previous step $g(u_n) = g(v_n)$ whence $g(u) = g(v)$. The case $u,v \in B \setminus D$ and
 $f(u) = f(v) = M$ can be considered similarly.

 $(III)$. Now we will assume that $u$ and $v$ are arbitrary distinct points of $B$ and that $0 < f(u) = f(v) < M$. Let again $m \in \mathds{N}$ be so large that inequalities $(1)$ hold. Like on step $(I)$ we have two alternatives $(a)$ and $(b)$. In case $(a)$ we apply again Lemma~\ref{l2}. In case $(b)$ we cannot apply Lemma~\ref{l1} directly because $B$ might be not locally connected at $u$ and/or at $v$. Therefore we fix $w \in B^1_m \cap B^2_m$ and consider two subcases. $(b1)$. $f$ is constant on some neighborhood of either $u$ or $v$. Then $f(u) = f(v)$ by Lemma~\ref{l2}. $(b2)$. $f$ is not constant on any open neighborhood of $u$ or $v$. Let $u_n \in B \setminus D$ be such that
 $u_n \mathop \rightarrow \limits_{n \to \infty} u$. Because $f(w)$ is an inner in $\mathds{R}$ point of the set $f(W)$ where $W$ is an arbitrary connected open neighborhood of $w$ in $B_m$ we can find a sequence of points $w_n$ such that $w_n \in B_m \subset B \setminus D$ and for any large enough $n \in \mathds{N}$ we have $f(u_n) = f(w_n)$. Then by step $(I)$ $g(u_n) = g(w_n)$ whence $g(u) = g(w)$. Similarly we prove that $g(v) = g(w)$.

 $(IV)$. Finally assume that $u$ and $v$ are arbitrary points in $B$ and $f(u) = f(v) = 0$ (the case $f(u) = f(v) = M$ can be considered in the same way). If there is a point $w \in B \setminus D$ such that $f(w) =0$ then we can proceed as in step $(III)$. Let us assume therefore that $f > 0$ on $B \setminus D$. Let $a \in (0,1)$ be the smallest number such that $f(a,0) = 0$. Then for any $n \in \mathds{N}$ such that $n > 1/a$ the set $f(\{(x,0) : \; a - 1/n \leq x \leq a \})$ is an interval $[0, \delta_n]$ where $\delta_n > 0$. Therefore we can find
 $a_n \in [a - 1/n, a)$ and $u_n \in B \setminus D$ such that $u_n \mathop \rightarrow \limits_{n \to \infty} u$ and $f(u_n) = f(a_n,0), n \in \mathds{N}, \; n > 1/a$, whence by step $(III)$ $g((a_n,0) = g(u_n)$ and therefore $g((a,0) = g(u)$. Similarly, $g(v) = g(a,0) = g(u)$ and we are done.

 \end{proof}

 By analyzing the steps of the proof of Proposition~\ref{p2} and the properties of the broom $B$ we used, we come to the following more general statement that can be proved in exactly the same way as Proposition~\ref{p2}.

 \begin{proposition} \label{p5} Let $K$ be a compact connected metrizable space. Assume that there are compact subsets $K_m, m \in \mathds{N}$ of $K$ with the properties.
 \begin{enumerate}
   \item $K_m \varsubsetneqq K_{m+1}$.
   \item $K_m$ is connected and locally connected.
   \item The interior of $K_m$ in $K$ is dense in $K_m$, $m \in \mathds{N}$.
   \item Every point of $K_m$ is a point of local connectedness in $K$.
   \item The set $\bigcup \limits_{m=1}^\infty K_m $ is dense in $K$.
   \item For every point $k \in K \setminus \bigcup \limits_{m=1}^\infty K_m $ there is a path in $K$ from $k$ to a point in $\bigcup \limits_{m=1}^\infty K_m $.
 \end{enumerate}
 Then $K \in \mathcal{DCP}$.

 \end{proposition}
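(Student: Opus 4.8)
The plan is to follow verbatim the four-step scheme of the proof of Proposition~\ref{p2}, with the segment $D$ replaced by the ``bad set'' $E := K \setminus \bigcup_m K_m$, the family $B_m$ replaced by the family $K_m$, and the locally connected piece $B \setminus D$ replaced by $\bigcup_m K_m$. Fix $G \in \{F\}^{cc}$; by Proposition~\ref{p1} it is multiplication by some $g \in C(K)$, and by Corollary~\ref{c1} it suffices to prove that $f(u) = f(v) \Rightarrow g(u) = g(v)$ for all $u, v \in K$. As in Proposition~\ref{p2} we may assume $f \geq 0$, $\min_K f = 0$, and $M := \max_K f > 0$. Two structural facts make the scheme run: first, by hypothesis~4 every point of $\bigcup_m K_m$ is a point of local connectedness in $K$, so it plays exactly the role that points of $B \setminus D$ played; second, since $\bigcup_m K_m$ is dense (hypothesis~5) and the $K_m$ increase (hypothesis~1), one has $\min_{K_m} f \downarrow 0$ and $\max_{K_m} f \uparrow M$, so for every $c \in (0,M)$ and all large $m$ the inequality $\min_{K_m} f < c < \max_{K_m} f$ holds, which is the analogue of inequality~$(1)$.

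First I would treat the case $u, v \in \bigcup_m K_m$ with $0 < f(u) = f(v) = c < M$ (the analogue of step $(I)$). Choosing $m$ so large that $u, v \in K_m$ and the displayed inequality holds, I set $K_m^1 = cl\{k \in K_m : f(k) < c\}$ and $K_m^2 = cl\{k \in K_m : f(k) > c\}$, both nonempty. If they are disjoint then, since $K_m$ is connected (hypothesis~2), their union is not all of $K_m$, so there is $k_0 \in K_m$ with a $K_m$-neighborhood on which $f \equiv c$; because the interior of $K_m$ in $K$ is dense in $K_m$ (hypothesis~3), this upgrades to a $K$-open set on which $f \equiv c$, so $Int f^{-1}(\{c\}) \neq \emptyset$ and Lemma~\ref{l2} gives $g(u) = g(v)$. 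If instead $w \in K_m^1 \cap K_m^2$, then $f(w) = c$ is interior to $f(W)$ for every connected neighborhood $W$ of $w$ in $K_m$; using hypotheses~2 and~4 the pairs $(w,u)$ and $(w,v)$ satisfy the hypotheses of Lemma~\ref{l1} with $w$ as the distinguished point, whence $g(w) = g(u) = g(v)$. Steps $(II)$ and $(III)$ then carry over with only cosmetic changes: step $(II)$ reduces the boundary values $c \in \{0,M\}$ for good points to step $(I)$ by approximation, and step $(III)$ handles arbitrary $u,v$ with $c \in (0,M)$ by approximating $u$ and $v$ by good points $u_n \to u$, $v_n\to v$ (density, hypothesis~5), solving $f(w_n) = f(u_n)$ for $w_n \in K_m$ near $w$ via the interior-value property, applying step $(I)$, and passing to the limit by continuity of $g$.

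The main obstacle is step $(IV)$, the case of arbitrary $u,v$ with $f(u) = f(v) = 0$ (the value $M$ being symmetric); this is the only place where hypothesis~6 does genuine work, as it is precisely what replaces the concrete fact that $D$ is a segment along which $f$ can be inverted. If $0$ is attained at some point of $\bigcup_m K_m$, or if $Int f^{-1}(\{0\}) \neq \emptyset$, the argument reduces to step $(II)$ via Lemma~\ref{l2} and approximation. Otherwise $f > 0$ on $\bigcup_m K_m$ and every zero of $f$ lies in $E$. I would fix a zero $r_0$ of $f$ and, by hypothesis~6, a path $\gamma : [0,1] \to K$ from $r_0$ to a point $\gamma(1) \in \bigcup_m K_m$, where necessarily $f(\gamma(1)) > 0$. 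Setting $\tau = \sup\{t : f(\gamma(t)) = 0\} < 1$ and $r := \gamma(\tau)$, the path just past $\tau$ realizes, for each $\eta > 0$, the whole interval $[0,\delta_\eta]$ with $\delta_\eta > 0$ of $f$-values by points $\gamma(t)$, $t \in [\tau, \tau+\eta]$, converging to $r$ as $\eta \to 0$, while $f \circ \gamma > 0$ on $(\tau, 1]$.

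Then for an arbitrary zero $u$ of $f$ I would pick good points $u_n \to u$ (hypothesis~5), which satisfy $f(u_n) \searrow 0^+$ by continuity, and solve $f(\gamma(s_n)) = f(u_n)$ by taking $s_n$ to be the first crossing past $\tau$; since $f\circ\gamma>0$ on $(\tau,1]$ one checks $s_n \to \tau$, so $\gamma(s_n) \to r$. Applying step $(III)$ to the pair $(u_n, \gamma(s_n))$, both at the level $f(u_n) \in (0,M)$, gives $g(u_n) = g(\gamma(s_n))$, and letting $n \to \infty$ yields $g(u) = g(r)$. As $r$ is fixed, $g$ is constant on $f^{-1}(\{0\})$, so in particular $g(u) = g(v)$. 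Having covered $c \in (0,M)$ in step $(III)$ and $c \in \{0,M\}$ in step $(IV)$, every pair $u,v$ with $f(u) = f(v)$ is accounted for, and Corollary~\ref{c1} gives $K \in \mathcal{DCP}$.
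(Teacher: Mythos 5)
Your proposal is correct and is essentially the paper's own argument: the paper gives no separate proof of Proposition~\ref{p5}, stating only that it ``can be proved in exactly the same way as Proposition~\ref{p2}'', and your adaptation is exactly that translation --- $K_m$ for $B_m$, hypothesis~(3) upgrading the $K_m$-open set where $f \equiv c$ to a $K$-open set in case~$(a)$, hypothesis~(4) supplying the local-connectedness conditions of Lemma~\ref{l1} at the distinguished point $w$, and hypothesis~(6) furnishing the path whose first-exit point $r = \gamma(\tau)$ replaces the smallest zero $(a,0)$ on the segment $D$ in step~$(IV)$. One small correction to your step~$(II)$: since $\bigcup_m K_m$, unlike $B \setminus D$, need not be open in $K$, the approximating points $u_n, v_n$ obtained from connected neighborhoods of $u$ and $v$ need not be ``good'' points, so that case should be routed through the arbitrary-point step~$(III)$ rather than step~$(I)$ (no circularity, as $(III)$ depends only on $(I)$) --- the same substitution you already make implicitly when anchoring at a good zero $k_0$ in your step~$(IV)$.
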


 \begin{example} \label{e2} This example is somewhat similar, though topologically not equivalent to the broom. The corresponding compact subspace of $\mathds{R}^2$ is traditionally called the \textbf{``bookcase''} and is defined as follows.
 $$ BC = cl \bigcup_{n=1}^\infty \{(x, 1/n) : \; x \in [0,1] \} \cup \{(0,y) : \; y \in [0,1]\}  \cup \{(1,y) : \; y \in [0,1]\} .$$
We claim that $BC \in \mathcal{DCP}$.
\end{example}

\begin{proof} For any $m \in \mathds{N}$ let $BC_m = BC \cap \{(x,y) \in \mathds{R}^2 : \; y \geq 1/m \}$. Then the compacts $BC_m$ have properties $(1) - (6)$ from the statement of Proposition~\ref{p5}.
\end{proof}

The conditions of Proposition~\ref{p5}  and the arc connectedness theorem (see~\cite[Theorem 5.1, page 36]{Why}) guarantee that the compact space $K$ satisfying the conditions of that proposition is arc connected. It is not known to the author if the arc connectedness of a metrizable compact $K$ is sufficient for $K \in \mathcal{DCP}$, but as our next example shows it surely is not necessary.

\begin{proposition} \label{p4} Let $K$ be the union of the square $[-1,0] \times [-1,1]$ and the set $\{(x, \sin{1/x}): \; 0 < x \leq 1 \}$.
Then $K \in \mathcal{DCP}$.

\end{proposition}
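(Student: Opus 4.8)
The plan is to verify the criterion of Corollary~\ref{c1}: taking any $f \in C(K)$, any $G \in \{F\}^{cc}$ with corresponding $g \in C(K)$, and any $u,v \in K$ with $f(u)=f(v)$, I must show $g(u)=g(v)$. As in Proposition~\ref{p2} I normalize so that $f \ge 0$, $\min_K f = 0$ and $M=\max_K f > 0$. Write $Q=[-1,0]\times[-1,1]$, $S=\{(x,\sin 1/x):0<x\le 1\}$ and $L=\{0\}\times[-1,1]$, so that $K=Q\cup S$, $\overline S = S\cup L$ and $L\subset Q$. The first observation is that the set of points at which $K$ fails to be locally connected is exactly $L$: every point of $Q\setminus L$ has small neighborhoods lying in the $2$-cell $Q$, and every point of $S$ has arbitrarily small connected neighborhoods of the form $S\cap\{|x-x_0|<\delta\}$ with $\delta<x_0$ (here one uses that $x\mapsto(x,\sin 1/x)$ is a homeomorphism of $(0,1]$ onto $S$), whereas at a point of $L$ every neighborhood meets infinitely many separated strands of $S$. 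Thus $K\setminus L$ is a dense, locally connected, open subset of $K$, and it is precisely on $L$ that Lemma~\ref{l1} is unavailable.

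Second, I record the consequence of uniform continuity that replaces the path connectedness used in Propositions~\ref{p2} and~\ref{p5}: since $\mathrm{dist}((x,\sin 1/x),(0,\sin 1/x))=x$, uniform continuity of $f$ gives $f(x,\sin 1/x)-f(0,\sin 1/x)\to 0$ as $x\to 0^+$, so near the seam the values of $f$ along $S$ are approximated by the values of $f$ along $L$, while as $x\to 0^+$ the height $\sin 1/x$ sweeps all of $[-1,1]$.

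The heart of the argument is a gluing step at each level $0<c<M$. If $\mathrm{int}\,f^{-1}(c)\neq\emptyset$, Lemma~\ref{l2} already makes $g$ constant on all of $f^{-1}(c)$. Otherwise I claim there is a crossing point $w\in K\setminus L$ with $f(w)=c$ at which $f$ takes values both above and below $c$ in every connected neighborhood; such a $w$ is a point of local connectedness satisfying the third hypothesis of Lemma~\ref{l1}, so the lemma gives $g(z)=g(w)$ for every point of local connectedness $z$ with $f(z)=c$, i.e.\ $g$ is constant on $(K\setminus L)\cap f^{-1}(c)$. To produce $w$ off the seam I examine $Q$ and $S$ separately: writing $A=\{f>c\}$ and $B=\{f<c\}$ (both nonempty), if $A$ and $B$ both meet the $2$-cell $Q$ then, since an open subset of $Q$ cannot lie in the nowhere dense set $L$, both meet $Q\setminus L$, and a separating point of the level set inside the connected region $Q\setminus L$ is interior-valued and off the seam; the same holds if both meet the arc $S$. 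The only remaining configuration is $f\le c$ on one of $Q,S$ and $f\ge c$ on the other, which by the uniform-continuity fact forces $f\equiv c$ on $L$ and makes $c$ an extreme value of $f$ restricted to each of $Q$ and $S$.

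Finally I handle the degenerate points by limits and the continuity of $g$, exactly in the spirit of steps $(II)$ and $(IV)$ of Proposition~\ref{p2}. Points of $L$, and points where $c$ is an extreme value of $f|_Q$ or $f|_S$, are approximated by locally connected points $z_n$ lying at nearby non-extreme levels $c_n\to c$; those neighboring levels fall under the gluing step, so $g(z_n)$ is the common value at level $c_n$, and passing to the limit using the continuity of $g$ transports the equalities to level $c$ (one-sided limits from the $Q$-side and from the $S$-side must agree, since a seam point is approached from both). In this way $g$ is constant on $(K\setminus L)\cap f^{-1}(c)$ for every $c$, and since $K\setminus L$ is dense and $g,f$ are continuous, $g$ factors continuously through $f$ on all of $K$, which is the assertion of Corollary~\ref{c1}. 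I expect the main obstacle to be exactly the placement of the crossing point $w$ off the seam $L$: because $K$ is not arc connected, the path arguments of Propositions~\ref{p2} and~\ref{p5} are not available, and it is the interplay between the $2$-dimensionality of $Q$, the structure of $S$ as a copy of $(0,1]$, and the uniform-continuity estimate at the seam that must be combined to avoid $L$ and, in the residual extreme-value configuration, to push the conclusion onto $L$ by continuity.
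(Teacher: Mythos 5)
Your proposal is correct and is essentially the paper's own argument: the same decomposition of $K$ into the locally connected pieces $Q\setminus L$ and $S$ separated by the seam $L$ (the paper's $C_1$, $C_2$, and $E$), the same use of Lemma~\ref{l2} when the level set has interior and of Lemma~\ref{l1} at a crossing point off the seam, and the same resolution of the residual case --- where $f\equiv c$ on $L$ and $c$ is extremal for $f$ on each piece --- by approximating all level-$c$ points at nearby non-extremal levels and passing to the limit by continuity of $g$, exactly as in case $(3)$ of the paper's proof. The differences are organizational rather than substantive: you glue level-by-level via a single crossing-point dichotomy (which merges the paper's cases $(1)$ and $(2)$ and avoids its exhausting subsets $C_{1,m}$, $C_{2,m}$), and your sketch leaves implicit what the paper spells out, namely that when comparing two degenerate points one must choose the approximating points at \emph{equal} levels (the paper's $f(a_m)=f(b_m)$) --- per-level constancy alone does not control the limit of the common level values as $c_n \to c$ --- together with the easy check that these neighboring levels are themselves gluing levels (an extremal configuration at level $c_n\neq c$ would force $f\equiv c_n$ on $L$, contradicting $f\equiv c$ there).
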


\begin{proof} Let $f \in C(K)$, $F$ be the corresponding multiplication operator, $G \in \{F\}^{cc}$, and $g \in C(K)$ the function corresponding to $G$. We can assume without loss of generality that $f(K) = [0, M]$ where $M > 0$. Let $E = \{(0,y) : \; y \in [-1,1]\}$. Notice that $K$ is locally connected at any point of $K \setminus E$. The set $K \setminus E$ is the union of two disjoint open connected subsets of $K$:
$C_1 = [-1,0) \times [-1,1]$ and $C_2 = \{(x, \sin{(1/x)}: x \in (0,1]\}$. Like in the proof of Proposition~\ref{p2} we have to consider several possibilities.
\begin{enumerate}
\item If $u, v \in C_1$ or $u, v \in C_2$ and $0 < f(u) = f(v) < M$. In this case we can prove that $g(u) = g(v)$ in very much the same  way as in step $(I)$ of the proof of Proposition~\ref{p2} by considering the sets $C_{1,m} = [-1 \times -1/m], m \in \mathds{N}$ (respectively the sets
$C_{2,m} = \{(x, \sin{(1/x)}) : \; 1/m < x < 1 \}$).

\item Let now assume that $0 < f(u) = f(v) < M$, $u \in C_1$, $v \in C_2$, and at least one of the inequalities holds $f(u) < \sup \limits_{k \in C_1}f(k)$ or $f(v) < \sup \limits_{k \in C_2}f(k)$. Then like in the proof of Proposition~\ref{p2} we can either find an open subset of $K$ on which $f$ is identically equal to $f(u)$ and apply Lemma~\ref{l2}, or there is a point $w \in C_1 \cup C_2 $ such that $f(w) = f(u)$ and for every open connected neighborhood $W$ of $w$ there is an open interval $I_w$ such that $f(w) \in I_w \subset f(W)$, and in this case we can apply Lemma~\ref{l1}.

\item Let $0 < f(u) = f(v) < M$, $u \in C_1$, $v \in C_2$, $f(u) = \sup \limits_{k \in C_1}f(k)$, and $f(v) = \inf \limits_{k \in C_2}f(k)$. It follows immediately that $f \equiv f(u) $ on $E$. For any $m \in \mathds{N}$ let $U_m$ be the open disk centered at $u$ and of radius $1/m$,
$V_m = \{(x, \sin{(1/x)}): x_v -1/m < x < x_v +1/m \}$ where $x_v$ is the $x$-coordinate of point $v$, and $W_m = K \cap (-1/m, 1/m) \times [-1,1]$. For any large enough $m$ the sets $U_m$, $V_m$, and $W_m$ are disjoint open neighborhoods in $K$ of $u$, $v$, and $E$, respectively. We can assume that $f$ is not identically equal to $f(u)$ on any open subset of $K$; indeed, otherwise we are done by Lemma~\ref{l2}. Then
$f(cl U_m) = [f(u) - \alpha_m, f(u)]$, $f(cl V_m) = [f(u), f(u) + \beta_m]$, and $f(cl W_m) = [f(u) - \gamma_m , f(u) + \delta_m]$, where
$\alpha_m, \beta_m, \gamma_m, \mathrm{and}\; \delta_m \searrow 0$. Therefore we can find points $u_m \in cl U_m$, $v_m \in cl V_m$, $w_m \in cl W_m \cap C_1$, and $z_m \in cl W_m \cap C_2$ such that $f(u_m) = f(w_m)$ and $f(v_m) = f(z_m)$. By part $(1)$ of the proof $g(u_m) = g(w_m)$ and $g(v_m) = g(z_m)$. Let $w$ (respectively, $z$) be a limit point of the sequence $w_m$ (respectively, $z_m$). Then $w, z \in E$, $g(u) =g(w)$, and
$g(v) = g(z)$. It remains to prove that $g(w) = g(z)$. If $w = z$ there is nothing to prove, therefore assume that $w \neq z$. Let $A_m$ (respectively, $B_m$) be the intersection of the closed disk with the center $w$ (respectively, z) and of radius $1/m$ with the closure of $C_2$. Recalling our assumption that $f$ is not identically equal to $f(u)$ on any open subset of $K$ we see that we can find sequences $a_m, b_m \in C_1$ that converge to $w$ (respectively, to $z$) and such that $f(a_m) = f(b_m)$. By step 1 $g(a_m) = g(b_m)$ whence $g(w) = g(z)$.

\item The implications $f(u) = f(v) =0 \Rightarrow g(u) = g(v)$ and $f(u) = f(v) =M \Rightarrow g(u) = g(v)$ can be easily proved by using the same type of reasoning as in parts $(1)$ - $(3)$.
\end{enumerate}

\end{proof}

Finally let us state some open questions.

\begin{problem} \label{pr1}

\begin{enumerate}
\item Is it possible to characterize the metrizable continua from the class $\mathcal{DCP}$ in purely topological terms not involving multiplication operators?
 \item In particular, is it true that any metrizable arc connected continuum belongs to $\mathcal{DCP}$?

 \item This question is a special case of the previous one. Let $C$ be the standard Cantor set and
$$K = \{(x,y): \; x \in C, y \in [0,1] \} \cup \{(x,0): x \in [0,1]\}. $$
Is it true that $K \in \mathcal{DCP}$?
  A positive answer to question $(3)$ would be in the author's opinion a strong indication that the answer to question $(2)$ should also be positive.

\end{enumerate}

\end{problem}

\end{document}